\documentclass{article}
\usepackage{url}
\usepackage{amsmath}
\usepackage{amsthm}
\usepackage{color,graphicx,fancybox,texdraw}
\usepackage{amssymb}
\usepackage[all]{xy}

\theoremstyle{plain}
\newtheorem{theorem}{Theorem}
\newtheorem{lemma}[theorem]{Lemma}
\newtheorem{proposition}[theorem]{Proposition}

\theoremstyle{definition}

\newtheorem{example}[theorem]{Example}

\numberwithin{equation}{section}
\numberwithin{theorem}{section}

\newcommand{\id}[1]{\left\langle#1\right\rangle}

\newcommand{\F}{\mathbb{F}}

\title{Minimal Reversible Nonsymmetric Rings}
\author{Steve Szabo\\Department of Mathematics and Statistic\\Eastern Kentucky University\\Richmond, KY 40475\\steve.szabo@eku.edu}
\date{}

\begin{document}
\maketitle
\begin{abstract}
Marks showed that $\F_2Q_8$, the $\F_2$ group algebra over the quaternion group, is a reversible nonsymmetric ring, then questioned whether or not this ring is minimal with respect to cardinality. In this work, it is shown that the cardinality of a minimal reversible nonsymmetric ring is indeed 256. Furthermore, it is shown that although $\F_2Q_8$ is a duo ring, there are also examples of minimal reversible nonsymmetric rings which are nonduo.
\end{abstract}
\noindent keywords: finite rings, reversible rings, symmetric rings, minimal rings, 2-primal rings

\section{Introduction and Overview}
In \cite{cohn_1999}, Cohn introduced reversible rings. A ring $R$ is reversible if for any $a,b\in R$, $ab=0$ implies $ba=0$. Since Cohn's introduction of reversible rings there has been many works connecting them to other ring properties (see \cite{gutan_2004,kim_2011,kim_2003,marks_2002}). In \cite{lambek_1971}, Lambek studied what are now called symmetric rings. A ring $R$ is symmetric if for any $a,b,c\in R$, $abc=0$ implies $bac=0$. Although Lambek's introduction was some time ago, more recently, the connections between symmetric rings and rings with related properties have been of some interest. In particular, Marks, in \cite{marks_2002}, studied the relationship between reversible and symmetric rings with and without identity. He showed that when considering rings without identity, these two properties are independent. However, it is easy to see that a symmetric ring with identity is reversible. He went on to show that $\F_2Q_8$, the $\F_2$ group algebra over the quaternion group, is a reversible nonsymmetric ring. He then asked whether or not a minimal reversible nonsymmetric ring with respect to cardinality is of order 256. The sole purpose of this paper is to answer this question.

A ring is called {\it right (resp. left) duo} if every right (resp. left) ideal is two-sided. Marks showed that this property even for rings with identity is independent of being reversible nonsymmetric by proving that $\F_2Q_8$ is right duo as well as showing the ring
\[
{k\id{x,y,z}\over\id{x^2,y^2,z^2,xyz,yzx,zxy}}
\]
for a field $k$ is reversible nonsymmetric and neither left nor right duo. This begs the question, ``Does minimality of reversible nonsymmetric rings depend on the duo property?'' With $k=\F_2$, this ring is of order $8192$. We have found a ring of order 256,
\[
\frac{\F_2\id{u,v}}{\id{u^3,v^3,u^2+v^2+vu,vu^2+uvu+vuv}}
\]
which is reversible nonsymmetric and neither left nor right duo (see Example \ref{ex_232}). In analyzing indecomposable reversible nonsymmetric rings, which we know must be local (see Proposition \ref{prop1}), for minimality the nilpotency of the jacobson radical must be at least 4 (Proposition \ref{prop_j3}) and at most 6 (Proposition \ref{prop_21s}). Examples having nilpotency 4 or 5 are given in Section \ref{sect_examples}. As of yet we have not identified an example with nilpotency 6.

There has been some work on specific ring types and minimality. In \cite{xue_1992}, Xue showed that a minimal noncommutative duo ring is of order 16 and that there are only three such rings. In \cite{kim_2011}, Kim and Lee showed that a minimal reversible noncommutative ring is of order 16. In another paper \cite{xu_1998}, Xu and Xue showed that a minimal zero-insertive ring (also known as semicommutative or having S~I condition) also has order 16 and that there are only 5 such rings. From \cite{derr_1994} we know there are only 13 noncommutative rings of order 16. From their list and the fact that there is only one noncommutative ring or order 8, with some effort, the minimal rings just mentioned could be found. Such work becomes difficult when rings of larger orders are involve. With the classification of rings of order $p^5$ in \cite{corbas_2000} and \cite{corbas_2000_2}, we see that such work becomes cumbersome as the classification of finite rings is difficult and that even for small orders, there is an abundance of rings. Hence, the techniques used in the papers mentioned are important to find minimal rings with various properties.

The connections between various ring properties related to commutativity such as reduced, symmetric, reversible, semicommutative and 2-primal are studied in \cite{marks_2003} where the strict inclusion of these ring classes is presented. Adding the fact that a ring is reversible if and only if it is semicommutative and reflexive to their ring class inclusions we have the following diagram.
\begin{center}
~\xymatrixrowsep{1pc}\xymatrixcolsep{1pc}\xymatrix{
\textrm{reduced}\ar@{=>}[d]&&\textrm{reflexive}\ar@{<=}[dl]\ar@{-}[d]\\
\textrm{symmetric}\ar@{=>}[r]&\textrm{reversible}&+\ar@{=>}[l]\\
\textrm{commutative}\ar@{=>}[u]&&\textrm{semicommutative}\ar@{<=}[ul]\ar@{=>}[r]\ar@{-}[u]&\textrm{2-primal}\\
}
\end{center}
Having minimal examples showing these strict inclusions can be instructive when studying them and our result here provides such an example. In a companion paper, \cite{szabo_2017_2}, we list minimal rings of these classes. Interestingly, many of the examples come from the set of noncommutative rings of order 16. It is shown there for instance that $\frac{\F_4[x;\sigma]}{\id{x^2}}$ where $\sigma$ is the Frobenius automorphism on $\F_4$, is the minimal noncommutative nonreduced symmetric ring and that $\frac{\F_2\id{u,v}}{\id{u^2,v^2,uv}}$ is a minimal semicommutative nonreversible ring. Both are rings of order 16. As we will see here, minimal reversible nonsymmetric rings are much larger than minimal rings of related types. Notice that reversible nonsymmetric rings in some sense, live between noncommutative nonreduced symmetric rings and semicommutative nonreversible rings. It is curious then that minimal nonsymmetric rings are significantly larger than these other two types.

In Section \ref{sect_prelim} preliminaries are presented. Section \ref{sect_main} contains the main result of the paper, namely that a minimal reversible nonsymmetric ring has order 256. Finally, in Section \ref{sect_examples} examples are provided.

\section{Preliminaries}
\label{sect_prelim}
We assume all rings have unity and that rings are finite with the exception of a polynomial ring. Given a ring $R$, \textit{$J(R)$} is the Jacobson radical of $R$ and $S(R)$ is the socle of $R$. A ring $R$ is {\it symmetric} if for all $a,b,c\in R$, $abc=0$ implies $bac=0$. A ring $R$ is {\it reversible} if for all $a,b\in R$, $ab=0$ implies $ba=0$. The following is a well-known result.

\begin{lemma}[see \cite{jacobson_1964}]
\label{lemma_vspace}
Let $R$ be a finite local ring and $F=R/J(R)$. Then $F$ is a field and $J(R)^k/J(R)^{k+1}$ is a finite dimensional $F$-vector space.
\end{lemma}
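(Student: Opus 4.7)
The plan is to handle the two claims separately, leaning on standard structure theory of local rings.

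For the first claim, that $F=R/J(R)$ is a field, I would start from the fact that in a local ring the Jacobson radical coincides with the unique maximal (two-sided) ideal, so every element of $R\setminus J(R)$ is a unit. Passing to the quotient, every nonzero element of $R/J(R)$ is invertible, which makes $R/J(R)$ a division ring. Because $R$ is finite, so is $R/J(R)$, and Wedderburn's little theorem on finite division rings then promotes it to a field.

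For the second claim, I would observe that $J(R)^k/J(R)^{k+1}$ carries a natural $R$-bimodule structure inherited from the ideal structure on $R$, and that the containments $J(R)\cdot J(R)^k\subseteq J(R)^{k+1}$ and $J(R)^k\cdot J(R)\subseteq J(R)^{k+1}$ show that $J(R)$ annihilates this quotient on both sides. Hence the bimodule structure factors through $R/J(R)=F$, equipping $J(R)^k/J(R)^{k+1}$ with the structure of an $F$-vector space (the left and right actions agreeing, or simply fixing one side). Finiteness of $R$ immediately forces the quotient to be finite, so it is finite-dimensional over $F$.

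The main (and only) non-trivial ingredient is the appeal to Wedderburn's little theorem in the first part; without the finiteness hypothesis on $R$, the quotient $R/J(R)$ would still be a division ring, but could fail to be commutative. Everything else is a routine unpacking of the definitions of local ring, Jacobson radical, and bimodule action.
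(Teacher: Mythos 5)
Your proof is correct. The paper itself offers no proof of this lemma --- it is stated as a well-known result with a citation to Jacobson --- so there is nothing to compare against, but your argument is the standard one: $R/J(R)$ is a division ring because the non-units of a local ring form the ideal $J(R)$, Wedderburn's little theorem upgrades it to a field using finiteness, and the quotients $J(R)^k/J(R)^{k+1}$ become $F$-vector spaces because $J(R)$ annihilates them. You also correctly flag the only genuinely non-trivial ingredient (Wedderburn) and sidestep the irrelevant issue of whether the left and right $F$-actions coincide by simply fixing one side.
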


The following results on reversible rings are needed throughout.

\begin{lemma}
\label{lemma1}
Let $R$ be a ring which contains an idempotent $e$ such that $eR(1-e)\neq0$. Then $R$ is not reversible.
\end{lemma}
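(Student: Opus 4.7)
The plan is to produce a single witness pair $(a,b)$ with $ab = 0$ but $ba \neq 0$, directly from the hypothesis.

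First I would use the hypothesis $eR(1-e) \neq 0$ to pick any nonzero element $x$ of this set, say $x = er(1-e)$ for some $r \in R$. The key observation is that such an $x$ is simultaneously ``left-absorbed'' by $e$ and ``right-annihilated'' by $e$: a quick computation using $e^2 = e$ gives $ex = e^2 r(1-e) = er(1-e) = x$ and $xe = er(1-e)e = er(e-e^2) = 0$.

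Now I would set $a = x$ and $b = e$. From the above, $ab = xe = 0$, while $ba = ex = x \neq 0$. This directly exhibits a failure of reversibility, so $R$ is not reversible.

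The only ``step'' here is the pair of one-line idempotent calculations; there is no real obstacle. The entire content of the lemma is the observation that any nonzero element of the Peirce component $eR(1-e)$ is killed on the right by $e$ but not on the left, which is precisely the asymmetry that reversibility forbids.
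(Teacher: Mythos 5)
Your proposal is correct and is essentially identical to the paper's proof: both take a nonzero $x\in eR(1-e)$, note that $xe=0$ while $ex=x\neq 0$, and conclude that reversibility fails. Nothing further is needed.
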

\begin{proof}
Let $a\in eR(1-e)\ne0$. Then $ea=a\neq 0$ and $ae=0$ showing $R$ is not reversible.
\end{proof}

\begin{proposition}
\label{prop1}
Let $R$ be a reversible ring. Then $R$ is local if and only if it is indecomposable.
\end{proposition}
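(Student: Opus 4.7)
The plan is to prove the two implications separately. The direction "local $\Rightarrow$ indecomposable" is routine and does not use reversibility: in any local ring the only idempotents are $0$ and $1$ (if $e$ and $1-e$ were both non-units they would lie in $J(R)$, contradicting $e+(1-e)=1$), so in particular $R$ admits no nontrivial central idempotent and is therefore indecomposable.

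For the converse, assume $R$ is reversible and indecomposable. The main step is to show that every idempotent of $R$ is central. Given an idempotent $e$, the contrapositive of Lemma \ref{lemma1} applied to $e$ yields $eR(1-e)=0$; applying Lemma \ref{lemma1} to the idempotent $1-e$ (noting $1-(1-e)=e$) yields $(1-e)Re=0$. Then for any $r\in R$,
\[
er \;=\; er\bigl(e+(1-e)\bigr) \;=\; ere, \qquad re \;=\; \bigl(e+(1-e)\bigr)re \;=\; ere,
\]
so $er=re$, i.e.\ $e$ is central.

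Once every idempotent is central, indecomposability of $R$ forces $R$ to have no nontrivial idempotents at all, since any nontrivial central idempotent $e$ would split $R$ as $eR\times(1-e)R$. Finally, because $R$ is finite, $J(R)$ is nilpotent, so idempotents lift modulo $J(R)$; hence $R/J(R)$ also has only the trivial idempotents. But $R/J(R)$ is a finite semisimple ring, so by Artin--Wedderburn it is a product of matrix rings over finite fields, and having only trivial idempotents forces exactly one factor of size $1\times 1$. Thus $R/J(R)$ is a field, which makes $R$ local.

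I do not expect a real obstacle. The crux is the double application of Lemma \ref{lemma1} (to both $e$ and $1-e$) to force centrality of idempotents; after that, the reduction from "no nontrivial idempotents" to "local" is the standard lifting-and-Artin--Wedderburn argument permitted by the finiteness hypothesis.
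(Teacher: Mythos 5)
Your proof is correct and takes essentially the same route as the paper's: the paper's terse three-line argument is exactly the observation that Lemma \ref{lemma1}, applied to both $e$ and $1-e$, forces every idempotent of a reversible ring to be central, after which indecomposability leaves only trivial idempotents and finiteness yields locality. You have simply supplied the details (centrality computation, idempotent lifting, Artin--Wedderburn) that the paper leaves implicit.
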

\begin{proof}
Local rings are indecomposable rings. Indecomposable rings have no central idempotents other than 1. The result follows from Lemma \ref{lemma1}.
\end{proof}

\begin{proposition}
\label{prop_j3}
Let $R$ be a local reversible ring. If $J(R)^3=0$ then $R$ is symmetric.
\end{proposition}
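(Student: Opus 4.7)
The plan is to verify the symmetry condition $abc = 0 \Rightarrow bac = 0$ by a case analysis on which of $a, b, c$ lie in $J(R)$. Because $R$ is local, each of $a, b, c$ is either a unit or lies in $J(R)$; and because $J(R)^3 = 0$, the case in which all three lie in $J(R)$ yields $bac = 0$ automatically. The remaining cases are those in which at least one of $a, b, c$ is a unit, and these will be handled using the reversibility hypothesis together with the standard cancellation that an invertible factor affords.

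Before dispatching those cases I would record the well-known consequence that reversibility implies semicommutativity, i.e., $xy = 0 \Rightarrow xzy = 0$ for every $z$. The short argument is that $xy = 0$ gives $yx = 0$ by reversibility, whence $zyx = 0$ for every $z$; and applying reversibility once more to $(zy)x = 0$ yields $x(zy) = xzy = 0$. This is the workhorse that makes the unit cases go through quickly.

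With semicommutativity in hand, the unit case splits into three subcases. If $a$ is a unit, $abc = 0$ forces $bc = 0$, and semicommutativity gives $bac = 0$. If $c$ is a unit, $abc = 0$ forces $ab = 0$, reversibility gives $ba = 0$, and hence $bac = 0$. The third subcase, with $b$ a unit, requires slightly more care: reversibility applied to $(ab)c = 0$ gives $cab = 0$; right-multiplication by $b^{-1}$ then gives $ca = 0$; a further application of reversibility gives $ac = 0$; and therefore $bac = b(ac) = 0$.

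I do not anticipate a serious obstacle. The only step needing any care is the $b$-unit subcase, where one has to combine reversibility with the invertibility of $b$ to cancel a factor from inside a triple product; the other two unit subcases and the all-in-$J(R)$ case follow at once from reversibility, its semicommutative consequence, and the hypothesis $J(R)^3 = 0$ respectively.
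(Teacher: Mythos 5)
Your proposal is correct and follows essentially the same route as the paper: reduce to the cases where some factor is a unit (handled by reversibility, cancellation of the unit, and the semicommutative consequence of reversibility) and the case $a,b,c\in J(R)$ (handled by $J(R)^3=0$). The paper compresses the $a$-unit and $c$-unit subcases into the remark that ``this logic also shows'' the claim, whereas you spell them out; the substance is identical.
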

\begin{proof}
Assume $J(R)^3=0$. Let $a,b,c\in R\setminus 0$ and assume $abc=0$. Since $R$ is local, at least two of $a$, $b$ and $c$ are in $J(R)$. Assume $b$ is a unit. Since $R$ is reversible, we have that $cab=0$, $ca=0$, $ac=0$ and $bac=0$. This logic also shows if either $a$ or $c$ is a unit, $bac=0$. Now assume $a,b,c\in J(R)$. Since $J(R)^3=0$, $bac=0$. Hence, $R$ is symmetric.
\end{proof}

Gutan in \cite{gutan_2004} characterized reversible group rings, provided examples of reversible nonsymmetric group rings and  showed that $\F_2Q_8$ is a minimal reversible nonsymmetric group ring. This result is easy to see in light of Lemma \ref{lemma1}. The only noncommutative group ring smaller than $\F_2Q_8$ is $\F_2S_3$. Since $\F_2S_3$ has a subring isomorphic to $M_2(\F_2)$, by Lemma \ref{lemma1}, it is not reversible. This verifies Gutan's result. Gutan left open the question of $\F_2Q_8$ being a minimal reversible nonsymmetric ring or not.

\section{Minimal Reversible Nonsymmetric Rings}
\label{sect_main}
\begin{theorem}
\label{theo_main}
A minimal reversible nonsymmetric ring has order 256.
\end{theorem}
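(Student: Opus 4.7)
Since $\F_2 Q_8$ is a reversible nonsymmetric ring of order $256$, it suffices to show no such ring of smaller order exists. Suppose $R$ is a minimal counterexample. Products of reversible (respectively symmetric) rings are reversible (respectively symmetric), so any direct-product decomposition $R = R_1 \times R_2$ would yield a strictly smaller reversible nonsymmetric factor; thus $R$ is indecomposable, and by Proposition~\ref{prop1} it is local. Let $F = R/J$ have size $q$, let $n$ be the nilpotency index of $J := J(R)$, and set $d_i = \dim_F(J^i/J^{i+1})$; Lemma~\ref{lemma_vspace} then yields
\[
|R| \;=\; q^{\,1 + d_1 + d_2 + \cdots + d_{n-1}}, \qquad d_i \geq 1 \text{ for } 1 \leq i \leq n-1.
\]
Proposition~\ref{prop_j3} forces $n \geq 4$, and because commutative rings are trivially symmetric, $R$ must be noncommutative.

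The second step narrows the invariants $q$ and $(d_i)$. From $|R| < 256$ and $d_i \geq 1$, the bound $|R| \geq q^{n} \geq q^4$ already excludes $q \geq 4$, and for $q = 3$ it forces $n = 4$ with $d_1 = d_2 = d_3 = 1$. I would then establish the auxiliary lemma that $d_1 \geq 2$: when $d_1 = 1$ the ideal $J$ is principal on each side, so $R$ is a (possibly noncommutative) chain ring, and a direct analysis of reversible finite chain rings — exploiting the canonical form of each element as $u^k$ times a unit for a fixed generator $u$ of $J$ — shows such $R$ is symmetric, contradicting the choice of $R$. This eliminates $q = 3$ entirely and leaves, over $\F_2$, only the tuples $(n; d_1, \ldots, d_{n-1})$ with $d_1 \geq 2$, each $d_i \geq 1$, and $\sum d_i \leq 6$; in particular $n \leq 6$.

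The final step is a case analysis on these (roughly a dozen) tuples. For each, lift an $\F_2$-basis of $J/J^2$ to $u_1, \ldots, u_{d_1} \in J$ and fix bases of each higher quotient $J^i/J^{i+1}$; the multiplication on $R$ modulo $J^{i+1}$ is then parameterized by the structure constants expressing products of the $u_j$'s in these bases. Reversibility descends to the associated graded ring $\mathrm{gr}(R) = \bigoplus_i J^i/J^{i+1}$, which inherits a reversible multiplication; in each graded degree the condition that an annihilation $ab = 0$ forces $ba = 0$ imposes strong linear constraints on the structure constants. For each candidate shape I would verify that these constraints either force enough commutativity on the $u_j$'s that $abc = 0 \Rightarrow bac = 0$ holds on all basis triples (so $R$ is symmetric, a contradiction) or are already inconsistent with reversibility. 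The principal obstacle is the sheer volume of casework, but the taming device is the passage to $\mathrm{gr}(R)$: it eliminates the higher-order perturbation terms and reduces each case to a finite linear-algebra check on a small $\F_2$-vector space.
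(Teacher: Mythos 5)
Your skeleton --- reduce to the local case via Proposition~\ref{prop1}, use Proposition~\ref{prop_j3} to force $J^3\neq 0$ and hence $|F|\in\{2,3\}$, then run a case analysis on the dimension vector $(d_1,\dots,d_{n-1})$ --- is exactly the paper's. But there are two genuine gaps. First, a bookkeeping error: for $q=3$ the constraints $n\geq 4$ and $3^{1+\sum d_i}<256$ still allow $\sum d_i=4$, so $D=(2,1,1)$ over $\F_3$ (order $243$) is a live case; it has $d_1=2$, so it is not a chain ring and your $d_1\geq 2$ lemma does not dispose of it, yet you then restrict the final case analysis to $\F_2$. The paper needs Proposition~\ref{prop_21s} ($d_2=1$ together with $d_1\leq 2$ or $F\cong\F_2$ forces commutativity) precisely to kill this case, as well as the many $\F_2$-tuples with $d_1\geq 2$ but $d_2=1$, such as $(3,1,1)$, $(4,1,1)$ and $(2,1,1,1,1)$, which your plan relegates to brute force.

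Second, and more seriously, the passage to the associated graded ring does not work. Reversibility of $R$ does not descend to $\mathrm{gr}(R)$: for $a\in J^i$ and $b\in J^j$, the graded condition $\bar a\bar b=0$ means only $ab\in J^{i+j+1}$, and reversibility of $R$ (which speaks only of products that are exactly zero) gives no control on whether $ba\in J^{i+j+1}$. Conversely, even if you proved $\mathrm{gr}(R)$ symmetric, that would not make $R$ symmetric: $abc=0$ with $bac\neq 0$ is compatible with $bac$ lying in a higher power of $J$ than its expected graded degree, so the nonsymmetry witness can vanish in $\mathrm{gr}(R)$. The ``higher-order perturbation terms'' you propose to discard are exactly where the problem lives: $\F_2Q_8$ is $\F_2\id{u,v}/\id{u^4,v^4,u^2+v^2,u^2+uv+vu+uvu}$, whose last relation is inhomogeneous, and Example~\ref{ex_FQ} notes that this ring sits just outside the reach of the $(2,2,1,1)$ argument of Proposition~\ref{prop_2211}. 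The paper's treatment of the hard shapes (Propositions~\ref{prop_23}, \ref{prop_321}, \ref{prop_2211}) accordingly works with exact relations in $R$ itself --- writing $c=a+b+t$ with $t\in J^2$, tracking identities such as $a_1a_2^2=a_2a_1a_2$ inside $J^4$ --- and cannot be replaced by linear algebra in the graded ring.
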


The proof of Theorem \ref{theo_main} will be given at the end of this section. As Marks showed that $\F_2Q_8$ is a reversible nonsymmetric ring of order 256, we show that no ring of order less than 256 is reversible but nonsymmetric. Since the direct sum of a set of reversible nonsymmetric rings is reversible nonsymmetric and by proposition \ref{prop1}, a reversible ring is indecomposable if and only if it is local, we need only consider local  reversible rings.

Throughout this section, let $R$ be a finite local reversible ring of order less than 256, $J=J(R)$, $S=S(R)$ and $F=R/J$. Since $R$ is finite and local, $F$ is a field. From Lemma \ref{lemma_vspace} we know $J^i/J^{i+1}$ is an $F$-vector space. Let $d_i=\dim_F(J^i/J^{i+1})$ and $D=(d_1,d_2,\dots,d_K)$ where $K$ is such that $d_K\neq 0$ and $d_{K+1}=0$. If $J^3=0$, by Proposition \ref{prop_j3} $R$ is symmetric. So, assume $J^3\neq 0$. Then $|R|\geq |F|^4$. If $|F|\geq 4$, $|R|\geq 256$ but we have assumed $|R|<256$. Therefore, $F\in\{\F_2,\F_3\}$. Now, $|R|=p^m$ for a prime $p$ and some positive integer $m$. Since $|R|<256$, $|R|\in\{2^4,2^5,2^6,2^7,3^4,3^5\}$.

\begin{proposition}
\label{prop_21s}
Assume $d_2=1$. If $d_1\leq 2$ or $F\cong\F_2$ then $R$ is commutative.
\end{proposition}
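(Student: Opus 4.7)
The plan is to leverage the one-dimensionality of $J^2/J^3$ to force a rigid multiplicative structure on $R$, then use reversibility to show that all commutators vanish. Fix $w\in J^2$ whose class spans $J^2/J^3$ over $F$; by Nakayama's lemma $J^2 = Rw = wR$, and consequently $J^3 = Jw = wJ$.

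The case $d_1 = 1$ is immediate: if $x\in J$ lifts a basis of $J/J^2$, Nakayama gives $J = Rx = xR$, and iterating yields $R = F + Fx + \cdots + Fx^{n-1}$ for some $n$, a polynomial expression in $x$ over the central subfield $F = \F_p$. Hence $R$ is commutative.

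For $d_1 = 2$, with $u,v$ lifting a basis of $J/J^2$, the plan is first to arrange $w = u^2$ after a linear change of basis, then to further adjust $v\mapsto v - bu$ so that $uv\in J^3$. The degenerate subcase in which every element of $J/J^2$ has square in $J^3$ (that is, the quadratic form $Q(x) = B(x,x)$ associated to the bilinear form $B\colon J/J^2\otimes_F J/J^2\to J^2/J^3$ vanishes identically) is handled separately by the polar-form identity: for $F\cong\F_2$ this identity gives $B(u,v) + B(v,u) = Q(u+v) + Q(u) + Q(v) = 0$, so $B$ is automatically symmetric, and for $F\cong\F_3$ the case is excluded by a direct reversibility calculation combined with the size constraints already in force in this section. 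Once $uv\in J^3 = u^2J$, write $uv = u^2s$ for some $s\in J$; then $u(v - us) = 0$, and reversibility yields $(v-us)u = 0$, i.e.\ $vu = usu\in J^3$. In particular $[u,v]\in J^3$, and the explicit form $[u,v] = u^2s - usu = u[u,s]$ provides the input to a bootstrap.

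The bootstrap step is the heart of the argument. View $D := [u,\,\cdot\,]$ as an inner derivation; then $D(u) = 0$ and, crucially, $D(u^2) = 0$, so Leibniz gives $D(Fu^2) = 0$ and $D(J^3) = D(Ju^2)\subseteq D(J)\cdot u^2$. An induction on $k\geq 3$ using the decompositions $J = Fu + Fv + J^2$ and $J^2 = Fu^2 + J^3$ shows that the hypothesis $[u,v]\in J^k$ propagates to $D(J)\subseteq J^k$, then $D(J^3)\subseteq J^{k+2}$, and finally $[u,v] = uD(s)\in J^{k+1}$. Since $J$ is nilpotent, iterating forces $[u,v] = 0$, and $R$ is commutative. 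For $F\cong\F_2$ with general $d_1$ the same scheme runs pair-by-pair after an initial basis adjustment making $u_1u_i, u_iu_1\in J^3$ for each $i\neq 1$, the derivation bootstrap being carried out with $D := [u_1,\,\cdot\,]$. The main obstacle is verifying that $B$ restricted to the span of $u_2,\ldots,u_d$ is also symmetric: when $Q$ vanishes on that span the polar-form observation in characteristic $2$ suffices, and otherwise a further reversibility computation leveraging products with $u_1$ and the identity $J^3 = u_1^2J$ is needed to close the argument.
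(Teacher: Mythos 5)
Your overall strategy --- single out a generator $u$ whose square spans $J^2/J^3$, normalize the remaining generators against it, and propagate commutativity down the radical filtration --- is close in spirit to the paper's, and the derivation bootstrap itself is sound once its hypotheses are in place. But there are two genuine gaps. First, you never justify that such a $u$ exists: the ``degenerate subcase'' in which every square lies in $J^3$ is not actually handled, because the polar-form identity only shows $B$ is symmetric, i.e.\ that generators commute \emph{modulo} $J^3$, whereas the proposition demands genuine commutativity; worse, in that subcase the identities $J^2=Fu^2+J^3$, $J^3=u^2J$ and $uv=u^2s$ on which the entire bootstrap rests are unavailable, so the argument does not merely need a patch there --- it has nothing to run on. (And the subcase is not vacuous a priori: with $u^2,v^2\in J^3$ one still gets a consistent-looking filtration $J^2=Fuv+J^3$, $J^3=Fvuv+J^4$, \dots, so it must be positively excluded.) The paper closes exactly this hole with a socle argument you are missing: writing $J^{K-1}=Ft+S(R)$ and replacing $a_i$ by $a_i-\lambda_ia_1$ so that $a_it=0=ta_i$ for $i>1$, it notes that some length-$(K-1)$ product of generators lies outside $S(R)$, that left-multiplying by $a_1$ and cyclically shifting keeps it nonzero by reversibility, and hence that every factor must be $a_1$; this forces $a_1^K\neq0$ and $J^j=Fa_1^j+J^{j+1}$ for all $j\geq2$, which is precisely the normal form you assume. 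Your $\F_3$ degenerate case is likewise only asserted to be ``excluded by a direct reversibility calculation,'' with no calculation given.

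Second, for $F\cong\F_2$ with $d_1\geq3$ the bootstrap with $D=[u_1,\cdot\,]$ only makes $u_1$ central; you flag the commutation of $u_i$ with $u_l$ for $i,l\geq2$ as ``the main obstacle'' and then defer it to an unspecified ``further reversibility computation.'' That computation is the entire second half of the paper's proof (splitting on whether $a_la_1=0$, in which case $a_ia_la_1=0$ forces $a_ia_l$ into the socle and reversibility over $\F_2$ finishes, and otherwise comparing the two expansions of $a_la_ia_l$ in powers of the now-central $a_1$), and it is not routine. Leaving it unproved leaves the proposition unproved precisely in the cases $D=(3,1,1)$ and $(4,1,1)$ that the main theorem relies on.
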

\begin{proof}
From the assumption, $J=Fa_1+Fa_2+\dots+Fa_n+J^2$ for $a_1,\dots,a_n\in J\setminus J^2$, $d_2=\dots=d_K=1$, $J^{K-1}=Ft+S(R)$ for some $t\in J^{K-1}$ and $S(R)=JJ^{K-1}=Fa_1t+Fa_2t+\dots+Fa_nt$. Since $d_K=1$, we may assume $S(R)=Fa_1t$ after renumbering. For some $\lambda_i\in F$, $a_it=\lambda_ia_1t$ and so $(a_i-\lambda_ia_1)t=0$. Let $a_i'=(a_i-\lambda_ia_1)$ for $i>1$. Then $J=Fa_1+Fa_2'+\dots+Fa_n'+J^2$ and since $R$ is reversible, $a_i't=0=ta_i'$ for $i>1$. For notational purposes replace $a_i$ with $a_i'$.

There must be a product of $K-1$ elements from $\{a_1,a_2,\dots,a_n\}$ in $J^{K-1}\setminus S(R)$. Let $x_1x_2\dots x_{K-1}$ be such a product. Then $a_1x_1\dots x_{K-1}\neq 0$ since $a_1$ does not annihilate $J^{K-1}$. By reversibility, any cyclic shift of the product is also nonzero. Since $a_i$ for $i>1$  annihilates $J^{K-1}$, $x_i=a_1$ for $1\leq i\leq K-1$. Hence, $a_1^K\neq 0$ and $J^j/J^{j+1}=Fa_1^j+J^{j+1}$ for $2\leq j\leq K$.

Let $2\leq i\leq n$, $a_ia_1=\sum_{j=2}^K\alpha_j a_1^j$ and $a_1a_i=\sum_{j=2}^K\beta_j a_1^J$ for some $\alpha_j,\beta_j\in F$. Since $F$ is a prime field,
\[
\sum_{j=2}^K\beta_j a_1^{j+1}=a_1a_ia_1=\sum_{j=2}^K\alpha_j a_1^{j+1}\\
\]
So, $\alpha_j=\beta_j$ showing $a_ia_1=a_1a_i$. Then $a_1\in Z(R)$ and if $n=2$ then $R$ is commutative. So, now we further assume $F\cong\F_2$.

Now let $2\leq i,l\leq n$. If $a_la_1=0$ then $a_ia_la_1=0$. This implies that $a_ia_l\in S$. Since $F\cong\F_2$, by reversibility, $a_ia_l=a_la_i$. So, assume $a_la_1\neq0$. Then $a_ia_l=\sum_{j=2}^K\gamma_j a_1^j$, $a_la_i=\sum_{j=2}^K\delta_j a_1^j$ and $a_la_1=a_1a_l=\sum_{k=2}^K\sigma_k a_1^k$ for some $\gamma_j,\delta_j,\sigma\in F$. Since $F\cong\F_2$ and $a_1\in Z(R)$,
\[
\sum_{j,k=2}^K\delta_j\sigma_k a_1^{j+k-1}=a_la_ia_l=\sum_{j,k=2}^K\gamma_j\sigma_k a_1^{j+k-1}
\]
From this, since $a_la_1\neq0$, it can be shown that $\gamma_j=\delta_j$. In any case, $a_ia_l=a_la_i$ and $R$ is commutative.
\end{proof}

If $m\in\{4,5\}$ the only possibilities for $D$ are $(1,1,1)$, $(1,1,1,1)$ or $(2,1,1)$. Proposition \ref{prop_21s} shows in these cases $R$ is symmetric. At this point, if $R$ is nonsymmetric then $F\cong\F_2$ and $|R|\in\{2^6,2^7\}$. In light of Proposition \ref{prop_21s} the only possibilities for $D$ would be $(2,2,1)$, $(2,2,1,1)$, $(2,2,2)$, $(2,3,1)$ or $(3,2,1)$. The next proposition rules out $D\in\{(2,2,1),(2,2,2),(2,3,1)\}$.

\begin{proposition}
\label{prop_23}
If $R$ is nonsymmetric, $F\cong\F_2$ and $D=(2,l,n)$ then $l=3$ and $n\geq 2$.
\end{proposition}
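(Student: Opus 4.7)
The plan is to argue by contradiction. Suppose $R$ is a finite local reversible nonsymmetric ring with $F\cong\F_2$, $D=(2,l,n)$, and $|R|<256$. By Proposition~\ref{prop_j3} we have $J^3\neq 0$, and by Proposition~\ref{prop_21s} we have $l\geq 2$. Fix $a_1,a_2\in J\setminus J^2$ with $J=\F_2 a_1+\F_2 a_2+J^2$, and let $e_{ij}:=a_ia_j+J^3\in J^2/J^3$. The four $e_{ij}$ span $J^2/J^3$ over $\F_2$, so $l\leq 4$. The size bound $|R|=2^{3+l+n}<2^8$ then gives $l+n\leq 4$, so the only cases left to rule out are $(l,n)\in\{(2,1),(2,2),(3,1)\}$.

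For $l=2$ there are two independent $\F_2$-linear relations among the $e_{ij}$. Using the $\mathrm{GL}_2(\F_2)$-action on $(a_1,a_2)$ I would put the pair of relations into one of a small list of normal forms. In each form I would multiply the relations on either side by $a_1$ and $a_2$ to produce identities in $J^3$. Since $J^4=0$, every cubic monomial $xyz$ with $x,y,z\in J$ is annihilated by $J$, so reversibility applied to $(xy)z=0$ and $x(yz)=0$ propagates zeros cyclically among such monomials. Combining these propagations with the identities obtained from the quadratic relations shows in each subcase that $a_ia_ja_k$ is invariant under swapping adjacent indices; together with the observation in the proof of Proposition~\ref{prop_j3} that triples containing a unit factor are automatically symmetric, this forces $R$ to be symmetric, a contradiction.

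For $(l,n)=(3,1)$ I would exploit the fact that $J^3=\F_2 t$ is one-dimensional: each cubic monomial equals $s(i,j,k)\,t$ for some $s(i,j,k)\in\F_2$, and symmetry of $R$ reduces to total symmetry of $s:\{1,2\}^3\to\F_2$. Reversibility gives the cyclic invariance $s(i,j,k)=s(j,k,i)=s(k,i,j)$ for free. The hypothesis $d_2=3$ provides a single nontrivial $\F_2$-linear relation among the $e_{ij}$; after a change of generators this relation can be put into one of a few normal forms such as $a_1^2\in J^3$, $a_1 a_2+a_2 a_1\in J^3$, or $a_1^2+a_2^2\in J^3$. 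Multiplying this relation by $a_1$ and $a_2$ on either side and reading off the result in $\F_2 t$ yields $\F_2$-linear identities on the eight values of $s$; in each normal form, these identities together with cyclic invariance force $s$ to be symmetric in all three arguments, contradicting the assumption of nonsymmetry.

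The main obstacle is the case-by-case bookkeeping: no deep tool is required beyond reversibility, $J^4=0$, and $\F_2$-linear algebra, but the several normal forms of the quadratic relations each have to be checked in turn, and one must verify each time that reversibility (not merely semicommutativity) is strong enough to collapse the cubic multiplication to a totally symmetric operation.
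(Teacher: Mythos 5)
Your strategy---bound $l+n\le 4$, reduce the possible spaces of quadratic relations among $a_1^2,a_1a_2,a_2a_1,a_2^2$ modulo $J^3$ to normal forms under $\mathrm{GL}_2(\F_2)$, and show in each case that the cubic structure constants $m_{ijk}=a_ia_ja_k$ are totally symmetric---is genuinely different from the paper's. The paper classifies nothing: it starts from a witness $a,b,c$ of nonsymmetry, shows $a,b$ must be independent modulo $J^2$ with $c\equiv a+b$, and uses reversibility to show that the only relation that can hold in $J^2/J^3$ is $a^2+ba+b^2$, which gives $l=3$ and $n\ge 2$ in one stroke (and produces Example \ref{ex_232} as a by-product). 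Your general setup is sound: since $J^4=0$, a product of three elements of $J$ depends multilinearly on their classes in $J/J^2$, so total symmetry of $(i,j,k)\mapsto a_ia_ja_k$ does imply $abc=bac$ for triples in $J$, and triples with a unit factor are covered by the reversibility argument in Proposition \ref{prop_j3}. Indeed your $(3,1)$ case (and likewise $(2,1)$) is easier than you make it: when $n=1$ each $a_ia_ja_k$ is $0$ or $t$, reversibility makes the zero set of $s$ cyclically invariant, and on $\{1,2\}^3$ the cyclic orbits coincide with the full $S_3$-orbits, so $s$ is automatically totally symmetric with no appeal to the quadratic relation at all.

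The genuine gap is the $l=2$ case, especially $(l,n)=(2,2)$. First, total symmetry of $m$ is strictly stronger than symmetry of $R$, so it is not a priori clear your program can succeed in every normal form; second, the ``small list'' is not small (there are $35$ two-dimensional subspaces of $\F_2^4$, falling into at least seven $\mathrm{GL}_2(\F_2)$-orbits), and you verify none of them. Concretely, take the orbit of the relation space spanned by $a_1w$ and $a_2w$ with $w=a_1+a_2$, i.e.\ $Jw\subseteq J^3$. The tools you name---cyclic propagation among the eight monomials in $a_1,a_2$ and one-sided multiplication of the relations by $a_1,a_2$---yield only $a_ia_ja_k=\mu_i$ depending on the first index, with $\mu_1,\mu_2$ both nonzero; nothing there forbids $\mu_1\ne\mu_2$, which would give $n=2$ with $m$ not totally symmetric (and $R$ genuinely nonsymmetric, e.g.\ $a_1wa_1=0$ while $wa_1a_1=\mu_1+\mu_2$). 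To kill this configuration one must apply reversibility to $(a_1w)a_1=0$ to conclude $wa_1^2=\mu_1+\mu_2=0$, i.e.\ run the cyclic propagation over the full trilinear map on $J/J^2$, not just over monomials in the chosen generators. Your phrase ``$x,y,z\in J$'' technically licenses this, but the proposal shows no awareness that it is needed, and without it the subcase does not close. As written, the $l=2$ argument is an unexecuted plan whose central claim is unverified; the paper's witness-based argument sidesteps the entire case analysis.
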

\begin{proof}
Assume $R$ is non-symmetric, $F\cong\F_2$ and $D=(2,l,n)$ for some $l$ and $n$. Then there exists $a,b,c\in J\setminus J^2$ such that $abc=0$ and $bac\neq 0$. If $a+J^2=b+J^2$, since $J^4=0$, $bac=a^2c=abc=0$. So, $a+J^2\neq b+J^2$. It can be similarly shown that $a+J^2\neq c+J^2$ and $b+J^2\neq c+J^2$. So, $a+J^2$ and $b+J^2$ are linearly independent in $J/J^2$ showing $J=Fa+Fb+J^2$ and $c=a+b+t$ for some $t\in J^2$. Since $R$ is reversible, $0=abc=aba+ab^2$, $0=cab=a^2b+bab$ and $0=bca=ba^2+b^2a$. So, $aba=ab^2$, $a^2b=bab$ and $ba^2=b^2a$. Again by reversibility, if any degree two monomial involving  $a$ and $b$ is in $J^3$, all degree 3 monomials involving $a$ and $b$ other than $a^3$ and $b^3$ are 0. This would mean $bac=0$. So, $a^2,ab,ba,b^2\in J^2\setminus J^3$. Assume
\[
\alpha a^2+\beta ab+\gamma ba+\delta b^2\in J^3
\]
for some $\alpha,\beta,\gamma,\delta\in F$. Considering left and right multiples of $a$ and $b$ we have that
\begin{eqnarray*}
0&=&\alpha a^3+\beta a^2b+\gamma aba+\delta ab^2\\
0&=&\alpha a^3+\beta aba+\gamma ba^2+\delta b^2a\\
0&=&\alpha ba^2+\beta bab+\gamma b^2a+\delta b^3\\
0&=&\alpha a^2b+\beta ab^2+\gamma bab+\delta b^3
\end{eqnarray*}
and with $aba=ab^2$, $a^2b=bab$ and $ba^2=b^2a$ this becomes
\begin{eqnarray*}
0&=&\alpha a^3+\beta bab+\gamma aba+\delta aba\\
0&=&\alpha a^3+\beta aba+\gamma ba^2+\delta ba^2\\
0&=&\alpha ba^2+\beta bab+\gamma ba^2+\delta b^3\\
0&=&\alpha bab+\beta aba+\gamma bab+\delta b^3.
\end{eqnarray*}
Adding the first and second equations,
\[
0=(\beta+\gamma+\delta)aba+\beta bab+(\gamma+\delta) ba^2.
\]
If $\beta=1$ then either $aba=bab$ or $ba^2=bab$. In either case, $bac=0$. So, $\beta=0$. Then
\[
0=(\gamma+\delta)aba+(\gamma+\delta)ba^2
\]
and adding the third and fourth equations above,
\[
0=(\alpha+\gamma)bab+(\alpha+\gamma)ba^2.
\]
If $\alpha\neq\gamma$, $bab=ba^2$. Similarly, if $\delta\neq\gamma$, $aba=ba^2$. In either case, $bac=0$. So, $\alpha=\gamma=\delta$. Then $\alpha=\gamma=\delta=1$ gives a non-zero solution and $a^2+J^3$, $ba+J^3$ and $b^2+J^3$ are linearly dependent in $J^2/J^3$. From these calculations we see then that $ab+J^3$, $ba+J^3$ and $b^2+J^3$ are linearly independent in $J^2/J^3$. We know that $J^2=Fa^2+Fab+Fba+Fb^2+J^3$ so $l=3$.

It was shown earlier that $a^2b=bab$ and $ba^2=b^2a$. So, by reversibility, either $a^2b=bab=b^2a=ba^2=0$ or $a^2b=bab\neq 0$ and $ba^2=b^2a\neq 0$. Since $ba^2+bab=bac\neq0$, $ba^2$ and $bab$ are both nonzero and hence linearly independent in $J^3$. So, $n\geq 2$.
\end{proof}

In Section \ref{sect_examples}, a few examples of reversible nonsymmetric rings are discussed. The proof of Proposition \ref{prop_23} led to Example \ref{ex_232} which turns out to be a minimal reversible nonsymmetric non-duo ring.

The next proposition takes care of the case when $F\cong\F_2$ and $D=(3,2,1)$.

\begin{proposition}
\label{prop_321}
If $F\cong\F_2$ and $D=(n,2,1)$ for some $n\geq 2$ then $R$ is symmetric.
\end{proposition}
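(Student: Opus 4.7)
The plan is to prove that $[J,J]\cdot J = 0$, which given $J^4 = 0$ is equivalent to symmetry of $R$. Applying the same reductions as in the proof of Proposition \ref{prop_j3} (units are handled there; if any of $a,b,c$ lies in $J^2$ then $abc, bac \in J^4 = 0$ automatically), I may restrict attention to triples in $J \setminus J^2$.

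The decisive first substantive step is to show that $J^2 \subseteq Z(R)$, and this is where the hypothesis $F \cong \F_2$ enters essentially. For each $r \in J$, both $\bar y \mapsto ry$ and $\bar y \mapsto yr$ are well-defined $F$-linear functionals $J^2/J^3 \to J^3 \cong F$, and by reversibility they share a kernel; over $\F_2$, two linear functionals into $\F_2$ with the same kernel must coincide, so $ry = yr$ for all $r \in J$ and $y \in J^2$. Combined with the triviality of commutation with $F = \{0,1\}$ this gives $J^2 \subseteq Z(R)$, and in particular the cyclic identity $abc = bca = cab$. The task reduces to showing $[a,b]\cdot c = 0$ for all $a,b,c \in J$.

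To finish I split on the two-sided ideal $A = \mathrm{Ann}(J)$. Since $J^3 \subseteq A \cap J^2 \subsetneq J^2$ (otherwise $J^3 = 0$), the quotient $(A \cap J^2)/J^3$ has $F$-dimension $0$ or $1$. In the first case $A \cap J^2 = J^3$, so the right-multiplication functionals $\rho_u, \rho_v$ on $J/J^2$ induced by a basis $u,v$ of $J^2/J^3$ are linearly independent; I then choose a basis $a_1,\dots,a_n$ of $J/J^2$ dual to $(\rho_u,\rho_v)$, so $ua_1 = va_2 = t$ (a generator of $J^3$) and all other pairings vanish. Writing $a_i a_j \equiv \gamma_{ij} u + \delta_{ij} v \pmod{J^3}$ and expanding the associativity identity $(a_i a_j) a_k = a_i (a_j a_k)$, using centrality of $J^2$ to identify $a_i u = ua_i$ and $a_i v = va_i$, one reads off that $\gamma_{ij} = \delta_{ij} = 0$ whenever $\max(i,j) \geq 3$ and that both tables are symmetric on $\{1,2\}^2$. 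Hence $[J,J] \subseteq J^3$ and $[J,J]\cdot J \subseteq J^4 = 0$. In the remaining case, $(A \cap J^2)/J^3$ is one-dimensional; choosing $u$ to represent a nonzero class in it makes $u$ contribute nothing to any triple product, and the cyclic identity $\delta_{ij}\nu_k = \delta_{jk}\nu_i$ (with $\nu_k t = va_k$) alone forces $\delta_{ij}$ to be symmetric in $i,j$, again giving $[J,J]\cdot J = 0$.

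The main obstacle is the first case: the argument is elementary but requires the adapted choice of basis in $J/J^2$ and a careful unwinding of the associativity constraints. Everything else — the reductions, the centrality of $J^2$, and the second case — drops out cleanly from two applications of the principle that two $\F_2$-linear functionals with the same kernel must agree.
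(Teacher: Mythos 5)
Your proof is correct, but it takes a genuinely different route from the paper's. The paper argues by contradiction from a witness to nonsymmetry: it fixes $a,b,c\in J\setminus J^2$ with $abc=0$ and $bac\neq 0$, uses reversibility to force $J^2=Fab+Fba+J^3$ together with a web of identities among degree-three monomials ($a^2b=aba=ba^2$, $b^2a=bab=ab^2$, $b^2c=bcb=cb^2$, etc.), and then expands $cb$, $ac$ and $bc$ in the basis $\{ab,ba\}$ modulo $J^3$ until it collapses to $0\neq b^2a=b^2c=bab+b^2a=0$. You instead prove the stronger structural statement that $abc=bac$ identically on $J$. Your two engines both check out: since $d_3=1$ and $F\cong\F_2$, left- and right-multiplication by any $r\in J$ give functionals $J^2/J^3\to J^3\cong\F_2$ with (by reversibility) the same kernel, hence equal, so $J^2\subseteq Z(R)$; and the associativity identity $(a_ia_j)a_k=a_i(a_ja_k)$, read against a basis of $J/J^2$ adapted to the pairing into $J^3$, symmetrizes the commutator data. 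I verified the two branches of your dichotomy on $\dim\bigl(\mathrm{Ann}(J)\cap J^2\bigr)/J^3$: in the first, the relation $\gamma_{ij}[k{=}1]+\delta_{ij}[k{=}2]=\gamma_{jk}[i{=}1]+\delta_{jk}[i{=}2]$ kills every coefficient with an index at least $3$ and symmetrizes the rest, so $[J,J]\subseteq J^3$; in the second, $\delta_{ij}\nu_k=\delta_{jk}\nu_i$ with some $\nu_{k_0}=1$ yields $\delta_{ik_0}=\delta_{k_0i}$ and then $\delta_{ij}=\delta_{ji}$, so $[J,J]\subseteq \mathrm{Ann}(J)+J^3$; either way $[J,J]\cdot J\subseteq J^4=0$. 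What your approach buys is conceptual clarity, a reusable fact (centrality of $J^2$ whenever $F\cong\F_2$ and $d_K=1$ at the top of the radical filtration), and a conclusion stronger than symmetry; what the paper's computation buys is explicit relations that point directly at candidate presentations, which is how Proposition \ref{prop_23} produced Example \ref{ex_232}. One small inaccuracy: $[J,J]\cdot J=0$ implies symmetry here but is not literally equivalent to it; only the implication you actually use is needed, so nothing is lost.
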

\begin{proof}
Assume $F\cong\F_2$, $D=(n,2,1)$ and $R$ is non-symmetric. Since $J^3\neq0$ and $J^4=0$, there exists $a,b,c\in J\setminus J^2$ such that $abc=0$ and $bac\neq 0$. Since $R$ is reversible we can deduce the following: $S=\{0,bac\}$, $acb=cba=bac\neq 0$,  $abc=cab=abc=0$, $ac,cb\in J^2\setminus J^3$, $ca,bc\neq 0$,  $a^2b=aba=ba^2$, $b^2a=bab=ab^2$ and $b^2c=bcb=cb^2$. We know that $J^2=Fab+Fba+J^3$ because otherwise $abc=bac$. Since $1-a$ is a unit, $(1-a)cb\neq 0$ which shows $c(1-a)b\neq 0$ and $bc\neq bac$. Then $bc\in J^2\setminus J^3$.

Now, $cb=\alpha ab+\beta ba+s$ for some $\alpha,\beta\in F$ not both 0 and $s\in J^3$. Then $0\neq acb=\alpha a^2b+\beta aba=(\alpha+\beta)a^2b$ so $a^2b\neq 0$ and $\alpha\neq\beta$. In any case, 
\[
b^2c=cb^2=\alpha ab^2+\beta bab=(\alpha+\beta) b^2a=b^2a.
\]
Next, $ac=\epsilon ab+\zeta ba+u$ for some $\epsilon,\zeta\in F$ not both 0 and $u\in J^3$. Then $0\neq acb=\epsilon ab^2+\zeta bab=(\epsilon+\zeta)b^2a$ so $b^2a\neq 0$. Also, $bc=\gamma ab+\delta ba+t$ for some $\gamma,\delta\in F$ not both 0 and $t\in J^3$. Then $0=abc=\gamma a^2b+\delta aba=(\gamma+\delta)a^2b$. Since $a^2b\neq 0$, $\gamma=\delta=1$ showing $bc=ab+ba+t$. Then
\[
0\neq b^2a=b^2c=bab+b^2a=0.
\]
Hence, if $F\cong\F_2$ and $D=(n,2,1)$ then $R$ is symmetric.
\end{proof}

We have one final case to rule out, that being when $|R|=2^7$ with $D=(2,2,1,1)$.

\begin{proposition}
\label{prop_2211}
If $F\cong\F_2$ and $D=(2,2,1,1)$ then $R$ is symmetric.
\end{proposition}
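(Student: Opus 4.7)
Suppose for contradiction that $R$ is nonsymmetric and pick $a,b,c\in R$ with $abc=0$ and $bac\neq0$. As in the proofs of Propositions \ref{prop_j3}, \ref{prop_23} and \ref{prop_321}, reversibility forces $a,b,c\in J$, and one has at one's disposal the cyclic relations $bca=cab=0$ together with the semicommutative extensions $arbc=abrc=0$ for all $r\in R$ (and their cyclic shifts).

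The key structural observation is that reversibility together with $\dim_F J^4=1$ and $F\cong\F_2$ forces any two cyclic shifts of a length-$4$ word in the generators of $J$ to be \emph{equal} in $J^4$: the equivalence $xy=0\Leftrightarrow yx=0$ applied to the splitting $(w_1)(w_2w_3w_4)$ identifies the zero/nonzero status of $w_1w_2w_3w_4$ and $w_2w_3w_4w_1$, and in the one-dimensional $\F_2$-space $J^4$ ``equal zero-status'' means ``equal as elements''. An analogous observation at length $3$ forces length-$3$ cyclic shifts to coincide in $J^3/J^4$.

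The plan is to case-split on the cosets $\bar a,\bar b,\bar c\in J/J^2\cong\F_2^2$. A short preliminary argument disposes of the case where some of $a,b,c$ lies in $J^2$; henceforth $a,b,c\in J\setminus J^2$. If any two of $\bar a,\bar b,\bar c$ coincide, write say $b=a+s$ with $s\in J^2$, choose a basis $\{\bar a,\bar d\}$ of $J/J^2$, and expand $bac$ using $abc=0$; this presents $bac$ as a sum of two specific length-$4$ monomials in $J^4$, and repeated use of semicommutativity (for instance, taking $r\in\{a,d,s\}$ in $arbc=0$, $abrc=0$, $crab=0$) kills all other length-$4$ monomials, after which the length-$4$ cyclic equivalence identifies the two remaining monomials and gives $bac=0$ in characteristic $2$. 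If instead $\bar a,\bar b,\bar c$ are pairwise distinct, one may arrange $c=a+b+t$ with $t\in J^2$; the length-$3$ cyclic equivalence applied to the expansion of $abc=bca=cab=0$ forces $\overline{bac}=0$ in $J^3/J^4$, so $bac\in J^4$, and then the same style of length-$4$ analysis as in the coincident-cosets case collapses $bac$ to $0$ in $J^4$.

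The main obstacle is the bookkeeping in both sub-cases, but especially in the pairwise-distinct case: several length-$4$ cyclic classes must be shown to coincide, which requires combining the relations arising from all cyclic shifts of $abc=0$ together with semicommutative extensions taken at many choices of $r$, before invoking the length-$4$ cyclic-equivalence observation to force the degree-$4$ terms of $bac$ to sum to zero over $\F_2$.
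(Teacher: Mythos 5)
Your length-$4$ observation is sound and is in fact the engine of the paper's own proof: two elements of $J^4$ that are cyclic shifts of one another have the same zero-status by reversibility, and in a one-dimensional $\F_2$-space equal zero-status forces equality (the paper uses exactly this in the form ``$a_1a_2^2\in J^4$ and $R$ reversible imply $a_1a_2^2=a_2a_1a_2$''). But there are two genuine gaps. First, your ``analogous observation at length $3$'' does not follow from the stated reasoning: reversibility controls zero-status \emph{in $R$}, and a length-$3$ word can be nonzero in $R$ yet lie in $J^4$, so equal zero-status in $R$ does not give equal images in the quotient $J^3/J^4$. The fact you need --- that cyclic shifts of length-$3$ words agree modulo $J^4$ --- does hold here, but only because of structural facts you never establish. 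The paper shows that after adjusting generators one may write $J=Fa_1+Fa_2+J^2$ with $a_2$ annihilating $J^3$ on both sides, whence $J^3=Fa_1^3+J^4$, $J^4=Fa_1^4$, every length-$4$ word containing $a_2$ is zero, and every length-$3$ word containing $a_2$ lies in $J^4$. Without that analysis your length-$3$ claim is unsupported, and it is precisely what you lean on to get $bac\in J^4$ in the pairwise-distinct case.

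Second, the case analysis is deferred exactly where the difficulty lies. In the coincident-coset case one gets $bac=asc+sac$ with $s\in J^2$, and $asc$ and $sac$ are \emph{not} cyclic shifts of one another, so your length-$4$ equivalence does not identify them; one again needs the explicit basis $\{a_1^2,u\}$ of $J^2$ modulo $J^3$ together with the memberships $a_1u,ua_1,a_2u,ua_2\in J^4$ that the paper derives. Likewise ``repeated use of semicommutativity kills all other length-$4$ monomials'' is asserted rather than proved, and it is the structural analysis above that makes such assertions checkable. The paper's route --- first pin down the multiplication on $J$, then use reversibility to reduce to the single nontrivial case $c\equiv a_2 \bmod J^2$ and compute $bac=bac-abc=(\alpha_2\beta_1+\alpha_1\beta_2)(a_1a_2^2+a_2a_1a_2)=0$ --- is what turns your ``bookkeeping'' into a finite, verifiable computation. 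As written, your proposal identifies the right mechanism but does not yet constitute a proof.
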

\begin{proof}
Assume $F\cong\F_2$ and $D=(2,2,1,1)$. Then $J=Fa_1+Fa_2+J^2$ for $a_1,a_2\in J\setminus J^2$, $J^3=Ft+J^4$ for some $t\in J^3$ and $J^4=JJ^3=Fa_1t+Fa_2t$. Since $d_4=1$, we may assume $J^4=Fa_1t$ after renumbering. For some $\lambda\in F$, $a_2t=\lambda a_1t$ and so $(a_2-\lambda a_1)t=0$. Replace $a_2$ with $a_2-\lambda a_1$. Since $R$ is reversible, $a_2t=0=ta_2$.

There must be a product of $3$ elements from $\{a_1,a_2\}$ in $J^3\setminus J^4$. Let $x_1x_2x_3$ be such a product. Then $a_1x_1x_2x_3\neq 0$ since $a_1$ does not annihilate $J^3$. By reversibility, any cyclic shift of the product is also nonzero. Since $a_2$  annihilates $J^3$, $x_1=x_2=x_3=a_1$. Hence, $a_1^4\neq 0$,  $J^3/J^{4}=Fa_1^3+J^4$ and $J^4=Fa_1^4$. Further more there exists $u\in\{a_1a_2,a_2a_1,a_2^2\}$ such that $J^2/J^3=Fa_1^2+Fu+J^3$. Note, $a_1u,a_2u,ua_1,ua_2\in J^4$ since $a_1ua_1$, $a_2ua_1$, $ua_1a_1$ and $ua_2a_1$ are all 0.

Let $a,b,c\in R$ such that $abc=0$. If $a$, $b$ or $c$ is a unit, $bac=0$. So assume $a,b,c\in J$. Then $a=\alpha_1a_1+\beta_1a_2+\gamma_1a_1^2+\delta_1u+t_1$, $b=\alpha_2a_1+\beta_2a_2+\gamma_2a_1^2+\delta_2u+t_2$ and $c=\alpha_3a_1+\beta_3a_2+\gamma_3a_1^2+\delta_3u+t_3$ for $\alpha_i,\beta_i,\gamma_i,\delta_i\in F$ and $t_i\in J^3$. First, if $\alpha_1=\alpha_2=\alpha_3=1$ then $abc\neq 0$. So, by reversibility we may assume $\alpha_3=0$. If $\beta_3=0$, $bac=0$. So assume $\beta_3=1$. Since $R$ is reversible and $a_1a_2^2\in J^4$, $a_1a_2^2=a_2a_1a_2$. Then

\begin{eqnarray*}
bac=bac-abc&=&(\alpha_2\beta_1-\alpha_1\beta_2)a_1a_2^2+(\alpha_1\beta_2-\alpha_2\beta_1)a_2a_1a_2\\
&=&(\alpha_2\beta_1-\alpha_1\beta_2+\alpha_1\beta_2-\alpha_2\beta_1)a_1a_2^2\\
&=&0
\end{eqnarray*}
Hence, $R$ is symmetric.
\end{proof}

We can now prove Theorem \ref{theo_main}.

\begin{proof}[Proof to Theorem \ref{theo_main}]
In \cite{marks_2002} it was shown that $\F_2Q_8$ is reversible nonsymmetric ring of order 256. Let $T$ be a local reversible ring of order less than 256. If $J(T)^3=0$, by Lemma \ref{prop_j3} $T$ is symmetric. So, assume $J(T)^3\neq0$. By Lemma \ref{lemma_vspace},  $T/J(T)$ is a field and $|T|=\left|T/J(T)\right|^l$ for some $l$. This implies $|T/J(T)|\in\{2,3\}$ and $4\leq l\leq 7$. This is precisely what was assumed about $R$ in this section. We reiterate: $R$ is a finite local reversible ring where $|R|=p^m\in\{2^4,2^5,2^6,2^7,3^4,3^5\}$, $J(R)^3\neq 0$, $J=J(R)$, $F=R/J$, $F$ is a field, $|F|\in\{2,3\}$, $d_i=\dim_F(J^i/J^{i+1})$, $D=(d_1,d_2,\dots,d_K)$ where $K$ is such that $d_K\neq 0$ and $d_{K+1}=0$.

If $m=4$, $D=(1,1,1)$ and if $m=5$, $D\in\{(1,1,1,1),(2,1,1)\}$ then, by Proposition \ref{prop_21s}, $R$ is symmetric. If $m\geq 6$, then $|F|=2$. If $m=6$,
\[
D\in\{(1,1,1,1,1),(2,1,1,1),(2,2,1),(3,1,1)\}
\]
and if $m=7$,
\[
D\in\{(1,1,1,1,1,1),(2,1,1,1,1),(2,2,1,1),(2,2,2),(2,3,1),(3,2,1),(4,1,1)\}.
\]
If
\[
D\in\{(1,1,1,1,1),(2,1,1,1),(3,1,1),(1,1,1,1,1,1),(2,1,1,1,1),(4,1,1)\},
\]
by Proposition \ref{prop_21s} $R$ is symmetric.
If
\[
D\in\{(2,2,1),(2,2,2),(2,3,1)\},
\]
by Proposition \ref{prop_23} $R$ is symmetric. If $D=(3,2,1)$, by Proposition \ref{prop_321} $R$ is symmetric. If $D=(2,2,1,1)$, by Proposition \ref{prop_2211} $R$ is symmetric. So, a local reversible ring of order less than 256 is symmetric. Again, for reversible rings local and indecomposable are equivalent conditions. In a ring direct sum, if a direct summand is nonsymmetric, the ring itself is nonsymmetric. Hence, a minimal reversible nonsymmetric ring is of order 256.
\end{proof}

\section{Examples of Minimal Reversible Nonsymmetric Rings}
\label{sect_examples}
In this section we provide two examples of minimal reversible nonsymmetric rings, one which is duo and one that is not. The first example is a byproduct of the proof to Proposition \ref{prop_23}.

\begin{example}
\label{ex_232}
Let
\[
R=\frac{\F_2\id{u,v}}{\id{u^3,v^3,u^2+v^2+vu,vu^2+uvu+vuv}}.
\]
We will show that this is a minimal reversible nonsymmetric ring that is neither left nor right duo.

First, $R$ is an 8-dimensional algebra over $R/J(R)\cong\F_2$ with basis
\[
\{1,v_1=u,v_2=v,v_3=u^2,v_4=uv,v_5=v^2,v_6=uvu,v_7=vuv\}
 \]
showing $|R|=256$. Second, since
\[
uv(u+v)=u^3+uv(u+v)=u^3+uvu+uv^2=u(u^2+v^2+vu)=0
\]
and
\[
vu(u+v)=vu^2+vuv=uvu\neq 0,
\]
$R$ is nonsymmetric. Third, since $uv\notin vR$ and $uv\notin Ru$, $R$ is neither right nor left duo. Finally, we show $R$ is reversible. Let $a,b\in J(R)$ with $a=\sum_{i=1}^7\alpha_iv_i$ and $b=\sum_{i=1}^7\beta_iv_i$ where $\alpha_i,\beta_i\in R/J(R)$. Then
\begin{eqnarray*}
ab&=&(\alpha_1\beta_1+\alpha_2\beta_1)u^2+(\alpha_1\beta_2)uv+(\alpha_2\beta_2+\alpha_2\beta_1)v^2+\\
&&(\alpha_1\beta_4+\alpha_2\beta_3+\alpha_2\beta_4+\alpha_5\beta_1+\alpha_3\beta_2)vuv+\\
&&(\alpha_1\beta_5+\alpha_2\beta_3+\alpha_4\beta_1+\alpha_5\beta_1+\alpha_4\beta_2)uvu.
\end{eqnarray*}
Assume $ab=0$. From the coefficient of $uv$, $\alpha_1=0$ or $\beta_2=0$. Then, considering the coefficients of $u^2$ and $v^2$ there are three restrictions:
(1) $\alpha_2=0$ or $\beta_1=0$, (2) $\alpha_1=0$ or $\beta_1=0$ and (3) $\alpha_2=0$ or $\beta_2=0$. This produces 7 possibilities which we list along with their implications.
\[
\begin{array}{|c|c|c|c|l|l|}
\hline
\alpha_1&\beta_2&\alpha_2&\beta_1&&\\\hline
0&0&0&0&ab=0&\\
0&0&0&1&ab=\alpha_5vuv+(\alpha_4+\alpha_5)uvu&\alpha_4=\alpha_5=0\\
0&0&1&0&ab=(\beta_3+\beta_4)vuv+\beta_3uvu&\beta_3=\beta_4=0\\
0&1&0&0&ab=\alpha_3vuv+\alpha_4uvu&\alpha_3=\alpha_4=0\\
0&1&0&1&ab=(\alpha_3+\alpha_5)vuv+\alpha_5uvu&\alpha_3=\alpha_5=0\\
1&0&0&0&ab=\beta_4vuv+\beta_5uvu&\beta_4=\beta_5=0\\
1&0&1&0&ab=\beta_3vuv+(\beta_3+\beta_5)uvu&\beta_3=\beta_5=0\\
  \hline
\end{array}
\]
Then for any of these situations,
\begin{eqnarray*}
ba&=&(\beta_1\alpha_1+\beta_2\alpha_1)u^2+(\beta_1\alpha_2)uv+(\beta_2\alpha_2+\beta_2\alpha_1)v^2+\\
&&(\beta_1\alpha_4+\beta_2\alpha_3+\beta_2\alpha_4+\beta_5\alpha_1+\beta_3\alpha_2)vuv+\\
&&(\beta_1\alpha_5+\beta_2\alpha_3+\beta_4\alpha_1+\beta_5\alpha_1+\beta_4\alpha_2)uvu=0.
\end{eqnarray*}
Hence, $R$ is reversible.
\end{example}

As with the previous example, the next example, which was originally given in \cite{marks_2002}, may have been discovered from the proof of Proposition \ref{prop_2211}.

\begin{example}
\label{ex_FQ}
Let $Q_8=\{\pm1,\pm i,\pm j,\pm k\}$ be the quaternions, $\{x_g: g\in Q_8\}$ and $R=\F_2Q_8$ where we write elements of $R$ as $\F_2$-linear combinations of $\{x_g: g\in Q_8\}$. Let $u=1+x_i$ and $v=1+x_j$. Since $uv(uv)\neq 0$ and $vu(uv)=0$, $R$ is nonsymmetric. We first verify these key relations, $u^4=0$, $v^4=0$, $u^2+v^2=0$, and $u^2+uv+vu+uvu=0$. Then one can see that $J(R)=\F_2u+\F_2v+J(R)^2$, $J(R)^2=\F_2uv+\F_2vu+J(R)^3$, $J(R)^3=\F_2uv^2+\F_2vu^2+J(R)^4$ and $J(R)^4=\F_2uv^3$. Then we deduce that
\[
R\cong\frac{\F_2\id{u,v}}{\id{u^4,v^4,u^2+v^2,u^2+uv+vu+uvu}}.
\]
These facts fit with the proof of Proposition \ref{prop_2211} if the restriction of $J(R)^3$ being principal is removed. Had we found this ring we could have verified reversibility similar to what was done for Example \ref{ex_232} although the proof in \cite{marks_2002} is more elegant. Though their proof is predicated on $R$ being right duo for which their proof is not straight forward. From our view of $R$ we deduce the duo property very simply. We need to show that $sr\in rR$ for any $r,s\in R$. From the generators of $J(R)^i$, we see we need only verify that $uv\in vR$ and that  $vu\in uR$. First, note that $vuv=uvu$. Then we have that
\[
vu=u^2+uv+uvu=u(u+v+vu)
\]
and
\[
uv=u^2+vu+uvu=v^2+vu+vuv=v(v+u+uv).
\]
\end{example}

To close we point out that we have two examples of minimal reversible nonsymmetric rings, one which is duo and the other which is not. Furthermore, one has a jacobson radical of nilpotency of 4 and the other of 5. From Propositions \ref{prop_j3} and \ref{prop_21s} we see that the nilpotency of jacobson radical must be between 4 and 6. Can we find a minimal reversible nonsymmetric ring that has a jacobson radical of nilpotency 6? From the results in Section \ref{sect_main} we know that $D=(2,2,1,1,1)$ (as defined in Section \ref{sect_main}) if such an example exists. Also, in viewing the proof to Proposition \ref{prop_321}, is there a minimal reversible nonsymmetric ring with $D=(3,2,2)$. Finally, can we identify all the minimal reversible nonsymmetric rings?

\bibliographystyle{plain}
\bibliography{../SteveSzaborefs}

\end{document}